\newtheorem{thm}{Theorem}[section]
\newtheorem{prop}[thm]{Proposition}
\newtheorem{lem}[thm]{Lemma}
\newtheorem{cor}[thm]{Corollary}
\theoremstyle{definition}
\newtheorem{defin}[thm]{Definition}
\theoremstyle{remark}
\newtheorem{rem}[thm]{Remark}
\numberwithin{equation}{section}
\newcommand{\x}{\times}
\newcommand{\ox}{\otimes}
\newcommand{\ttt}{{\tilde t}}
\newcommand{\ensemble}[2]{\{\,#1\mid#2\,\}}
\newcommand{\vespa}{\vspace{1em}}
\newcommand{\C}{{\mathbb C}}
\newcommand{\Z}{{\mathbb Z}}
\newcommand{\CaD}{\mathcal D} 
\newcommand{\CE}{\mathcal E}       
\newcommand{\CH}{\mathcal H}
\newcommand{\CI}{\mathcal I}
\newcommand{\CM}{\mathcal M}  
\newcommand{\CN}{\mathcal N}
\newcommand{\CO}{\mathcal O}
\newcommand{\TX}{{T^*X}}
\newcommand{\TL}{{T^*\Lambda}}
\newcommand{\TYX}{{T^*_YX}}
\newcommand{\dTYX}{{\dot T^*_YX}}
\newcommand{\OX}{{{\mathcal O_X}}}
\newcommand{\cOXY}{\widehat{\mathcal O}_{X|Y}}
\newcommand{\DX}{{\mathcal D_X}}
\newcommand{\EX}{{{\mathcal E_X}}}
\newcommand{\EV}{{{\mathcal E_V}}}
\newcommand{\EIX}{{{\mathcal E^\infty_X}}}
\newcommand{\BYX}{{{\mathcal B_{Y|X}}}}
\newcommand{\BIYX}{{{\mathcal B^\infty_{Y|X}}}}
\newcommand{\CYX}{{{\mathcal C_{Y|X}}}}
\newcommand{\CIYX}{{{\mathcal C^\infty_{Y|X}}}}
\newcommand{\infi}{{\scriptstyle(\infty)}}
\newcommand{\infun}{{\scriptstyle(\infty,1)}}
\newcommand{\CHM}{{Ch(\CM)}}
\newcommand{\TCHM}{{\widetilde{Ch}(\CM)}}
\newcommand{\mCHM}[1]{{Ch_{\Lambda}#1(\CM)}}
\newcommand{\mCHN}[1]{{Ch_{\Lambda}#1(\CN)}}
\newcommand{\mTCHM}[1]{{\widetilde{Ch}_{\Lambda}#1(\CM)}}
\DeclareMathOperator{\Ext}{\mathcal Ext}
\DeclareMathOperator{\End}{\mathcal End}
\newcommand{\ga}{\alpha}
\newcommand{\gth}{\theta}          
\newcommand{\gTh}{\Theta}
\newcommand{\gvt}{\vartheta}
\newcommand{\gl}{\lambda}          
\newcommand{\gL}{\Lambda}
\newcommand{\gm}{\mu}
\newcommand{\gn}{\nu}
\newcommand{\gx}{\xi} 
\newcommand{\gp}{\pi} 
\newcommand{\gS}{\Sigma}
\newcommand{\gt}{\tau}
\newcommand{\gf}{\varphi} 
\newcommand{\gO}{\Omega}
\newcommand{\tgp}{{\pi_\Lambda}}              
\begin{document}

\title{Regularity and b-functions for D-modules}
\author{Yves Laurent}
\address{ Institut Fourier Math\'ematiques \\
    UMR 5582  CNRS/UJF\\
       BP 74\\
F-38402 St Martin d'H\`eres Cedex\\
  FRANCE }
\email{Yves.Laurent@ujf-grenoble.fr}
\subjclass[2000]{35A27}
\keywords{D-module, regular, b-function}
\maketitle

\section*{Abstract} 

A holonomic ${\mathcal D}$-module on a complex analytic manifold
admits always a $b$-function along any submanifold. If the module is regular, it
admits also a regular $b$-function, that is a $b$-function with 
a condition on the order of the lower terms of the equation.
There is a weaker condition of regularity: regularity along
a submanifold. We prove that a module which is regular along
a submanifold admits a regular $b$-function along this submanifold.

\thispagestyle{empty}
\newpage
%\tableofcontents

\section*{Introduction}

Let $f$ be a holomorphic function on an open set $X$ of $\C^n$. A $b$-function for $f$
is a polynomial $b\in\C[s]$ such that there exists a differential operator $P$ on $U$ with parameter $s$
satisfying an equation:
\begin{equation} \label{equ1} 
b(s)f(x)^s=P(s,x,D_x)f(x)^{s+1} \tag{$1$}
\end{equation}
The generator of the ideal of the $b$-functions associated to $f$ is usually called the Bernstein-Sato
polynomial of $f$ (see\cite{BJORK} for details).

This definition has been extended by Kashiwara \cite{INV2} % A VERIFIER
to holonomic $\CaD$-modules. Let $X$ be a complex manifold and $Y$ a smooth hypersurface. Let
$\DX$ be the sheaf of differential operators on $X$. Let $\CM$ be a holonomic $\DX$-module
and $u$ a section of $\CM$.
A $b$-function for $u$ along $Y$ is an equation  
\begin{equation}\label{equ2}
b(t D_t)u=tP(t,x,t D_t,D_x)u \tag{$2$}
\end{equation}
satisfied by $u$. Here $(t,x)$ are local coordinates of $X$ such that $t$ is an equation for $Y$. A similar definition
exits for submanifolds of $X$ of any codimension.

The $b$-function is called \textbf{regular} if $P$ may be chosen so that its order as a differential operator
is not greater than the degree of $b$.

It has been proved by Kashiwara \cite{INV2} that a holonomic $\CaD$-module admits a $b$-function
along any smooth hypersurface and by Kashiwara-Kawa{\"\i} \cite{KKHOUCH} that a \textsl{regular} holonomic 
$\CaD$-module admits a \textsl{regular} $b$-function along any smooth hypersurface. 

There is another notion of regularity, the regularity of a $\CaD$-module along
a submanifold. The definition will be given in definitions \ref{def:first} and \ref{def:defreg}. 
In the case of a hypersurface, this is equivalent to the fact that 
solutions in the formal completion transversally to the hypersurface are convergent.
It has been proved that a regular holonomic module is regular along any submanifold in
\cite{KKREG}. It is also a direct consequence of the definition that if a $\CaD$-module
admits a regular $b$-function along a submanifold, it is regular along it.
We will prove here that a module which is regular along a submanifold $Y$ admits a regular
$b$-function along $Y$.

The problem is better understood after microlocalization, that is for modules over the sheaf $\EX$
of microdifferential operators. Regularity and $b$-function may be defined for
a coherent $\EX$-module $\CM$ along a conic lagrangian submanifold $\gL$ of the cotangent bundle \cite{ENS}. 
All definitions are invariant under quantized canonical transformation. 
Concerning $\CaD$-modules, this will prove that the result is true not only for hypersurfaces 
but may be extended to any smooth subvariety of $X$. 

A regular holonomic $\EX$-module is regular along any smooth conic lagrangian subvariety of $\TX$. Conversely,
in the definition of Kashiwara-Kawa{\"\i} \cite{KKREG}, a holonomic $\EX$-module is regular if it is
regular along the smooth part of its characteristic variety.

In the first section of this paper, we briefly recall the definitions of $b$-functions and state the result for $\DX$-modules.
In the second section, we study the microlocal case. We define the filtrations and bifiltrations on holonomic $\EX$-modules 
In the third section, we study the equivalent definitions of regularity along a lagrangian manifold and we prove the existence of
a regular $b$-function as a product of $\gth^d$ by the  $b$-function $b(\gth)$.

\section{The case of differential equations}

In this first section, we briefly recall the definitions of $b$-functions and regularity in the framework
of $\DX$-modules.

\subsection{Filtrations and b-functions}\label{sec:vfil}

Let $X$ be a complex analytic manifold, $\CO_X$ the sheaf of holomorphic functions on $X$
and $\DX$ the sheaf of differential operators on $X$ with coefficients in $\OX$. Let
$\TX$ be the cotangent bundle to $X$ with canonical projection $\gp:\TX\to X$.
Let $Y$ be a submanifold of $X$.

The sheaf $\DX$ is provided with two filtrations. The first one is 
the filtration $(\CaD_{X,m})_{m\ge 0}$ by the usual order, that is the degree in the derivations. 
Here, we will denote this filtration by $F\DX$, i.e. $F_m\DX=\CaD_{X,m}$.
The corresponding graded ring $gr^F\DX$ is identified to $\gp_*\CO_{[\TX]}$
the sheaf of holomorphic functions on $\TX$ with polynomial coefficients 
in the fibers of $\gp$.

The second one is the V-filtration which has has been defined by Kashiwara in \cite{KVAN} as:
\begin{equation}
V_k\DX=\ensemble{P\in\DX|_Y}{\forall \ell\in \Z, P\CI_Y^\ell\subset \CI_Y^{\ell-k}}
\end{equation}
where $\CI_Y$ is the ideal of definition of $Y$ and $\CI_Y^\ell=\OX$ if $\ell\le 0$.

\vespa
Let $\gt:T_YX\to Y$ be the normal bundle to $Y$ in $X$ and $\CO_{[T_YX]}$ the sheaf of holomorphic
functions on $T_YX$ which are polynomial in the fibers of $\gt$. Let $\CO_{[T_YX]}[k]$ be the subsheaf
of $\CO_{[T_YX]}$ of homogeneous functions of degree $k$ in the fibers of $\gt$. There are canonical isomorphisms
between $\CI_Y^k/\CI_Y^{k-1}$ and $\gt_*\CO_{[T_YX]}[k]$, between $\bigoplus\CI_Y^k/\CI_Y^{k-1}$
and $\gt_*\CO_{[T_YX]}$. Hence 
the graded ring $gr^V\DX$ associated to the V-filtration on $\DX$ acts naturally 
on $\CO_{[T_YX]}$. An easy calculation \cite{SCHAPBOOK} shows that as a subring of $\End(\gt_*\CO_{[T_YX]})$ it is identified to 
$\gt_*\CaD_{[T_YX]}$ the sheaf of differential
operators on $T_YX$ with coefficients in $\CO_{[T_YX]}$ .

The Euler vector field $\gth$ of $T_YX$ is the vector field which acts on  $\CO_{[T_YX]}[k]$
by multiplication by $k$. Let $\gvt$ be any differential operator in $V_0\DX$ whose image in $gr^V_0\DX$
 is~$\gth$. Let $\CM$ be a coherent $\DX$-module and $u$ a section of $\CM$. 

\begin{defin}\label{def:bf}
A polynomial $b$ is a {\sl b-function}
for $u$ along $Y$ if there exists a differential operator $Q$ in $V_{-1}\DX$ such that $(b(\gvt)+Q)u=0$. 
\end{defin}

\begin{defin}\label{def:regbf}
A polynomial $b$ of degree $m$ is {\sl a regular b-function} for $u$ along $Y$ if there exists a differential operator $Q$ in $V_{-1}\DX\cap F_m\DX$ such that $(b(\gvt)+Q)u=0$. 
\end{defin}

The set of $b$-functions is an ideal of $\C[T]$, if it is not zero we call a generator of this
ideal \textsl{"the" b-function} of $u$ along $Y$.

Remark that the set of regular b-function for $u$ is not always an ideal of $\C[T]$.

\subsection{Regularity} 

A  holonomic module is regular its formal solutions converge at each point. 
More precisely, if $x$ is a point of $X$,
$\CO_{X,x}$ the ring of germs of $\OX$ at $x$ and $\mathfrak m$ the maximal ideal of $\CO_{X,x}$,
let us denote by $\widehat\CO_{X,x}$ 
the formal completion of $\CO_{X,x}$ for the $\mathfrak m$-topology. The holonomic $\DX$-module
$\CM$ is {\sl regular} if and only if:
$$\forall j\ge0,\quad \forall x\in X,\quad  \Ext_\DX^j(\CM,\widehat\CO_{X,x})= \Ext_\DX^j(\CM,\CO_{X,x})$$

This is the direct generalization of the definition in dimension $1$ (Ramis \cite{RAMISGAGA}). An algebraic and microlocal definition
is given by Kashiwara-Kawa{\"\i} in \cite{KKREG} and they prove that, for a $\DX$-module, it is
equivalent to this one.

A weaker notion is the regularity along a submanifold. This has been studied in several papers 
\cite{THESE},\cite{ENS},\cite{LME}. The definition will be given in section \ref{sub:defreg} 
using microcharacteristic varieties. Here we give a more elementary definition for $\DX$-modules
which works only for hypersurfaces.

Let $Y$ be a smooth hypersurface of $X$, $\gvt$ is the vector field of section \ref{sec:vfil} and 
$\CM$ be a holonomic $\DX$-module $\CM$ defined in a neighborhood of $Y$.
\begin{defin}\label{def:first}
The holonomic $\DX$-module $\CM$ is \textsl{regular along} $Y$ if any section $u$
of $\CM$ is annihilated by a differential operator of the form $\gvt^N + P + Q$
where $P$ is in $F_{N-1}\DX\cap V_{0}\DX$
and $Q$ is in $F_N\DX\cap V_{-1}\DX$
\end{defin}

Let us denote by $\cOXY$ the formal completion of $\OX$ along $Y$, that is
$$\cOXY=\varprojlim_k {\OX/\CI_Y^k}$$

We proved in \cite{INV} and \cite{LME} that $\CM$ is regular along $Y$ if and only if 
\begin{equation}
\label{equ:byx}
\forall j\ge0,\quad  \Ext_\DX^j(\CM,\cOXY)= \Ext_\DX^j(\CM,\OX)
\end{equation}

We can still state definition \ref{def:first} and equation (\ref{equ:byx}) when $Y$ 
is not a hypersurface but they are not equivalent. Correct statements will be given in section
\ref{sec:micro} in the microlocal framework.

\vespa
It has been proved by Kashiwara in \cite{INV2}, that
if $\CM$ is a holonomic $\DX$-module, there exists a $b$-function for any section $u$ of $\CM$ along
any submanifold $Y$ of $X$.
Kashiwara and Kawa{\"\i} proved in \cite{KKHOUCH} that if $\CM$ is a regular holonomic $\DX$-module,
there exists a regular $b$-function for any section $u$ of $\CM$ along
any submanifold $Y$ of $X$.

We will prove in section \ref{sec:main} that if $\CM$ is a holonomic $\DX$-module which is regular 
along a submanifold $Y$ then it admits a regular $b$-function along $Y$.

\subsection{Explicit formulas in local coordinates}

Consider local coordinates $(x_1,\dots,x_p,t)$ of $X$ such that $Y$ is the hypersurface
$Y=\ensemble{(x,t)\in X}{t=0}$. Then $T_YX$ has coordinates $(x_1,\dots,x_p,\ttt)$.

The Euler vector field  $\gth$ of $T_YX$ is $\gth=\ttt D_\ttt$ and we can choose 
$\gvt=t D_t$. Then a $b$-function is an equation
$$b(tD_t)+tQ(x,t,D_x,tD_t)$$
and it is a regular $b$-function if the order of $Q$ is less or equal to the degree of $b$.

The module $\CM$ is regular along $Y$ if any section of $\CM$ is annihilated by an equation
$$(tD_t)^N+P(x,D_x,tD_t)+tQ(x,t,D_x,tD_t)$$
with $P$ of order less or equal to $N-1$ and $Q$ of order less or equal to $N$. 

\section{Microdifferential equations}\label{sec:micro}

In this section, we review basic definitions of filtration, V-filtration and
bifiltrations on $\EX$-modules. 
Details may be founded in \cite{BJORK} or \cite{SCHAPBOOK}, and \cite{ENS}.

\subsection{V-filtration on microdifferential operators}\label{sec:ext}

We denote by $\EX$ the sheaf of microdifferential operators of \cite{SKK}. It is filtered by the order,
we will denote that filtration by $\EX = \bigcup F_k\EX$ and call it the usual filtration.

Let $\CO_{(\TX)}$ be the sheaf of holomorphic functions on $\TX$ which are finite sums of homogeneous functions
in the fibers of $\gp:\TX\to X$. The graded ring $gr^F\EX$ is isomorphic to $\gp_*\CO_{(\TX)}$ \cite{SCHAPBOOK}.

In \cite{ENS}, we extended the definitions of V-filtrations and $b$-functions
to microdifferential equations and lagrangian subvarieties of the cotangent bundle.

Let $\gL$ be a lagrangian conic submanifold of the cotangent bundle $\TX$ and $\CM_\gL$ be a holonomic 
$\EX$-module supported by $\gL$. A section $u$ of $\CM\gL$ is non degenerate if the ideal of $\CO_\TX$ generated by
the principal symbols of the microdifferential operators annihilating $u$ is the ideal of definition of $\gL$.
The module $\CM\gL$ is a simple holonomic $\EX$-module if it is generated by a non degenerate section $u_\gL$.
Such a module always exists locally \cite{SKK}.

Let $\CM_{\gL,k}=\CE_{X,k}u_\gL$. The V-filtration on $\EX$ along $\gL$ is defined by:
\begin{equation}
V_k\EX=\ensemble{P\in\EX|_\gL}{\forall \ell\in \Z, P\CM_{\gL,\ell}\subset \CM_{\gL,\ell+k}}
\end{equation}
This filtration is independent of the choices of $\CM_\gL$ and $u_\gL$, so it is globally defined \cite[Prop. 2.1.1.]{ENS}. 

Let $\CO_\gL[k]$ be the sheaf of holomorphic functions on $\gL$ homogeneous of degree $k$ in the
fibers of $\gL\to X$ and $\CO_{(\gL)}=\bigoplus_{k\in\Z}\CO_\gL[k]$. 
There is an isomorphism between $\CM_{\gL,k}/\CM_{\gL,k-1}$ and $\CO_{\gL,k}$. By this isomorphism
the graded ring $gr^V\EX$ acts on $\CO_{(\gL)}$ and may be identified to the sheaf $\CaD_{(\gL)}$ of 
differential operators on $\gL$ with coefficients in $\CO_{(\gL)}$.

All these definitions are invariant under quantized canonical transformations \cite{ENS}.

The restriction to the zero section $T^*_XX$ of $\TX$ of the sheaf $\EX$ is the sheaf
$\DX$ of differential operators. If $\gL$ is the conormal bundle $\TYX$ to a submanifold
$Y$ of $X$ then the V-filtration induced on $\DX$ by the V-filtration of $\EX$ is the same
than the V-filtration defined in section \ref{sec:vfil}. The associated graded ring is the
restriction to $Y$ of $\CaD_{(\gL)}$ which is the sheaf $\CaD_{[\gL]}$ of differential
operators with coefficients polynomial in the fibers of $\gL\to X$. 

The correspondence between the isomorphism $gr^V\DX\simeq \CaD_{[T_YX]}$ of section \ref{sec:vfil}
and the isomorphism  $gr^V\DX\simeq \CaD_{[\TYX]}$ is given
by the partial Fourier transform associated to the duality between the normal bundle $T_YX$ 
and the conormal bundle $\TYX$.

\begin{rem}\label{rem:fourier}
Consider local coordinates $(x_1,\dots,x_p,t_1,\dots,t_d) $ of $X$ and $Y=\ensemble{(x,t)\in X}{t=0}$.
Let $(x,\ttt)$ be the corresponding coordinates of $T_YX$ and $(x,\gt)$ the corresponding coordinates of 
$\TYX$. 
Let $\gth=\sum_{i=1}^d \ttt_iD_{\ttt_i}$ be the Euler vector field of the fiber bundle $T_YX$  and 
$\gth_\gL$ be the Euler vector field of $\gL=\TYX$. 

The Fourier transform is given by $\ttt_i\mapsto -D_{\gt_i}$ and $D_{\ttt_i}\mapsto \gt_i$. So $\gth$
is mapped to $-d-\gth_\gL$ where $d$ is the codimension of $Y$.
\end{rem}

\subsection{Filtrations on $\CE$-modules}\label{sec:vfil2}

Let $\CM$ be a (left) coherent $\EX$-module. A F-filtration (resp. a V-filtration) of $\CM$
is a filtration compatible with the F-filtration (resp. V-filtration) of $\EX$.  

A good filtration of $\CM$ is a filtration which is
locally of the form $\CM_k=\sum_{i=1}^N\CE_{X,k-k_i}u_i$ for $(u_1,\dots,u_N)$ local sections of $\CM$
and $(k_1,\dots,k_N)$ integers. In the same way, a good V-filtration of $\CM$ is
locally of the form $V_k\CM=\sum_{i=1}^N V_{k-k_i}\EX u_i$.

If $\CM$ is provided with a good F-filtration, the associated graded module
$gr^F\CM$ is a coherent module over $gr^F\EX=\gp_*\CO_{(\TX)}$. Then $\CO_{\TX}\ox_{\gp^{-1}gr^F\EX}\gp^{-1}gr^F\CM$ 
is a coherent $\CO_{\TX}$-module which defines a positive analytic cycle on $\TX$ independent of the good filtration.
This cycle is the \textsl{characteristic cycle} of $\CM$ and denoted by $\TCHM$. 
Its  support is the \textsl{characteristic variety} of $\CM$ denoted $\CHM$, it is equal to the support
 of the module $\CM$ itself.

The characteristic variety of a coherent $\EX$-module is a \textsl{homogeneous involutive} subvariety of $\TX$. 
When its dimension is minimal, that is when it is a lagrangian, the module is \textsl{holonomic}.

If $\CM$ is provided with a good V-filtration, the graded module
$gr^V\CM$ is a coherent module over $gr^V\EX=\CaD_{(\gL)}$ hence $gr^V\CM$ is a coherent
$\CaD_{(\gL)}$-module. The characteristic cycle of this module is a positive analytic cycle on 
$T^*\gL$ and its support is involutive. According to \cite{ENS} and \cite{LME}, this characteristic cycle
is called the
microcharacteristic cycle of $\CM$ of type $\infty$ and is denoted by $\mTCHM\infi$. The corresponding
microcharacteristic variety is $\mCHM\infi$. 
%(sometimes also denoted by $\mCHM\infii$)
We have the following fundamental result:

\begin{thm}[Theorem 4.1.1 of \cite{ENS}]\label{thm:grholo}
The dimension of the characteristic variety of $gr^V\CM$ is less or equal to the dimension of 
the characteristic variety of $\CM$. In particular, $gr^V\CM$ is holonomic if $\CM$ is holonomic.
\end{thm}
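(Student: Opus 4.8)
The plan is to read both characteristic varieties off a single bigraded object attached to a bifiltration of $\CM$, and then to reduce the dimension comparison to the elementary fact that a normal cone preserves dimension. First I would choose, locally, a \emph{good bifiltration} $\{\CM_{m,k}\}$ of $\CM$, that is a filtration compatible with both the order filtration $F_m\EX$ and the V-filtration $V_k\EX$ and which is good for each of them separately; its existence follows from the coherence of $\CM$ and the noetherianity of the associated graded rings. Out of it one forms the bigraded module $gr^Fgr^V\CM$ over the commutative bigraded ring $gr^Fgr^V\EX$.

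The first key point is that the two iterated gradings commute, $gr^Fgr^V\CM\simeq gr^Vgr^F\CM$ over $gr^Fgr^V\EX\simeq gr^Vgr^F\EX$. Granting this, the characteristic variety $\mCHM\infi$ of the $\CaD_{(\gL)}$-module $gr^V\CM$, which by definition is the support in $\TL$ of $gr^F(gr^V\CM)$, is also computed as the support of $gr^V(gr^F\CM)$. Now $gr^F\CM$ is a coherent module over $gr^F\EX=\gp_*\CO_{(\TX)}$ whose support is exactly $\CHM$, while the V-filtration on $\EX$ induces on the commutative ring $\CO_{(\TX)}$ a filtration whose graded ring is the structure sheaf of the normal cone $T_\gL\TX$ of $\TX$ along $\gL$. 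Hence the support of $gr^V(gr^F\CM)$ is the normal cone $C_\gL(\CHM)$ of $\CHM$ along $\gL$.

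Since $\gL$ is lagrangian in the symplectic manifold $\TX$, the symplectic form identifies the normal bundle $T_\gL\TX$ with the cotangent bundle $\TL$, and under this identification the variety $\mCHM\infi$ coincides with $C_\gL(\CHM)$. As forming the normal cone of a subvariety along a submanifold does not change the dimension, this gives $\dim\mCHM\infi=\dim C_\gL(\CHM)=\dim\CHM$, which is in particular the asserted inequality. When $\CM$ is holonomic, $\CHM$ is lagrangian, so $\dim\CHM=\dim X=\dim\gL$; combined with the involutivity of the characteristic variety of any coherent $\CaD_{(\gL)}$-module, which forces $\dim\mCHM\infi\ge\dim\gL$, this yields $\dim\mCHM\infi=\dim\gL$, i.e. $gr^V\CM$ is holonomic.

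The main obstacle is the commutation of the two graded functors in the first step. The delicate part is to produce a bifiltration simultaneously good for $F$ and $V$ and to check that applying $gr^F$ then $gr^V$ gives the same bigraded module as the reverse order: this is essentially a statement about the associated Rees construction, equivalently about the flatness of the deformation to the normal cone, and it is where the coherence of $\CM$ must be used carefully. Once the bigraded module is well defined, the dimension estimate reduces to the two geometric inputs quoted above, namely that the normal cone preserves dimension and that characteristic varieties of coherent $\CaD_{(\gL)}$-modules are involutive.
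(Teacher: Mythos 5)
The paper does not actually prove Theorem \ref{thm:grholo}: it quotes it from Theorem 4.1.1 of \cite{ENS}, so there is no internal proof to compare against. Judged on its own, your argument has a genuine gap at its central step, and it proves too much. You identify $Ch(gr^V\CM)$ with the support of $gr^V(gr^F\CM)$, hence with the normal cone $C_\gL(\CHM)$, via an asserted isomorphism $gr^Fgr^V\CM\simeq gr^Vgr^F\CM$. For a bifiltered module one only has canonical surjections from the bigraded module $gr^W\CM$ onto each iterated graded module (for instance $gr^F_jgr^V_k\CM=W_{k,j}\CM/\bigl(W_{k,j-1}\CM+(W_{k,j}\CM\cap V_{k-1}\CM)\bigr)$ is a quotient of $gr^W_{k,j}\CM$), and these surjections are not isomorphisms in general. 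In the terminology of this paper, the support of $gr^W\CM$ is $\mCHM\infun$, the support of $gr^Fgr^V\CM$ is $Ch(gr^V\CM)=\mCHM\infi$, and the equality $\mCHM\infi=\mCHM\infun$ is exactly condition (iii) of Definition \ref{def:defreg}: it holds if and only if $\CM$ is \emph{regular} along $\gL$. So your commutation step silently assumes the very regularity the paper is about; were it true unconditionally, every holonomic module would be regular along every conic lagrangian.

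Concretely, the equality $\dim Ch(gr^V\CM)=\dim\CHM$ that your argument yields is false. Take $X=\C$, $\gL=T^*_{\{0\}}X$, $\CM=\DX/\DX(t^2D_t-1)$, with the good $V$-filtration $V_k\CM=V_k\DX\,u$ generated by the class $u$ of $1$. Since $t^2D_t\in V_{-1}\DX$ and $t^2D_tu=u$, one gets $u=(t^2D_t)^ku\in V_{-k}\CM$ for all $k$, so $V_k\CM=\CM$ for every $k$ and $gr^V\CM=0$, whereas $\CHM=\{t=0\}\cup\{\tau=0\}$ has dimension $1$; the inequality of the theorem is strict here, so no normal-cone identity can hold for $\mCHM\infi$. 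What does survive of your geometric picture is the chain of inclusions $\mCHM\infi\subseteq\mCHM\infun\subseteq C_\gL(\CHM)$ together with $\dim C_\gL(\CHM)\le\dim\CHM$, which gives precisely the stated inequality (and this, or a homological codimension transfer through the graded rings, is how \cite{ENS} proceeds). The missing ingredient is therefore an inclusion of supports, not an isomorphism of bigraded modules. Your final step --- involutivity of $Ch(gr^V\CM)$ in $\TL$ forcing $\dim\ge\dim\gL$, hence holonomicity in the holonomic case --- is correct.
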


\vespa
As $gr^V_k\EX$ is identified to the subsheaf
of $\CaD_{(\gL)}$ of differential operators $P$ satisfying $[\gth,P]=kP$, 
the Euler vector field $\gth$ acts on $gr^V\CM$, so we may define a morphism
$\gTh$ by $\gTh=\gth-k$ on $gr^V_k\CM$. As  $[\gth,P]=+kP$ on $gr^V_k\EX$, $\gTh$ commutes with the action
of  $gr^V\EX$ that is defines a section of $\End_{gr^V\EX}(gr^V\CM)$.

\begin{defin}\label{def:bfonct}
The set of polynomials $b$ satisfying 
$$b(\gTh)gr^V(\CM)=0$$
 is an ideal of $\C[T]$. When this ideal is not
zero, a generator is called \textsl{the $b$-function of $\CM$ for the $V$-filtration along $\gL$}.
\end{defin}

The $b$-function depends on the good $V$-filtration, its  roots are shifted by integers
by change of V-filtration.

The $b$-function is invariant under quantized canonical transform. 
Let $Y$ be a submanifold of $X$ of codimension $d$. If we want to identify the $b$-function of definition 
\ref{def:bfonct} with that of definition \ref{def:bf}, we have to replace $\gvt$ in definition \ref{def:bf}
by $-d-\gvt$ because of remark \ref{rem:fourier}.

\begin{thm}
If $\CM$ is a holonomic $\EX$-module, there exists a $b$-function for $\CM$ along
any lagrangian submanifold $\gL$ of $\TX$.
\end{thm}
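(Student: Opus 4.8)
The plan is to reduce the existence of a $b$-function to the holonomicity of $gr^V\CM$ provided by Theorem \ref{thm:grholo}, and then to show that the endomorphism $\gTh$ of $gr^V\CM$ is algebraic over $\C$. First I would work in a neighborhood of a point of $\gL$ on which a good $V$-filtration $V_\bullet\CM$ exists; such a filtration exists because $\CM$, being holonomic, is coherent. By Theorem \ref{thm:grholo} the graded module $gr^V\CM$ is then a holonomic $\CaD_{(\gL)}$-module, and by the discussion preceding Definition \ref{def:bfonct} the operator $\gTh$ lies in $\End_{gr^V\EX}(gr^V\CM)=\End_{\CaD_{(\gL)}}(gr^V\CM)$. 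The theorem thus amounts to proving that the ideal of Definition \ref{def:bfonct}, namely $\ensemble{b\in\C[T]}{b(\gTh)gr^V\CM=0}$, is nonzero.

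To prove this I would argue by dévissage on the dimension of the support $S$ of $gr^V\CM$ in $\gL$, which is a closed conic analytic subset since $gr^V\CM$ is holonomic. On the smooth part of a top-dimensional stratum of $S$, holonomicity ensures that $gr^V\CM$ is a locally free $\CO_\gL$-module of finite rank equipped with a flat connection. Since $\gTh$ commutes with the $\CaD_{(\gL)}$-action, it restricts on this stratum to a horizontal $\CO_\gL$-linear endomorphism; the coefficients of its characteristic polynomial are then horizontal, hence locally constant, and furnish a polynomial $b_0\in\C[T]$ with $b_0(\gTh)$ vanishing on the connection. Consequently $b_0(\gTh)gr^V\CM$ is a holonomic submodule whose support is contained in $S$ minus this stratum, hence of strictly smaller dimension.

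Iterating this step over the finitely many strata — the process terminates because $gr^V\CM$ is holonomic and therefore of finite length, so the supports strictly decrease — I would obtain polynomials $b_0,b_1,\dots,b_r$ whose product $b=\prod_i b_i$ satisfies $b(\gTh)gr^V\CM=0$ with $b\neq0$. This produces a nonzero $b$-function on the chosen neighborhood; since the $V$-filtration along $\gL$ is globally defined \cite[Prop. 2.1.1.]{ENS} and the construction is compatible with the coherence of $\CM$, these local $b$-functions assemble into the globally defined ideal of Definition \ref{def:bfonct}, which is therefore nonzero. I expect the main obstacle to be the core dévissage step: guaranteeing that at a generic point of its support the holonomic module $gr^V\CM$ is a finite-rank connection on which $\gTh$ acts with a characteristic polynomial having constant coefficients, and that the induction on the support dimension terminates. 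This is exactly the point where the holonomicity of $gr^V\CM$, i.e. Theorem \ref{thm:grholo}, is indispensable.
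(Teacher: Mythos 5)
Your argument is correct and follows essentially the same route as the paper: both reduce the statement, via Theorem \ref{thm:grholo}, to the holonomicity of $gr^V\CM$ and then to the algebraicity of $\gTh$ over $\C$, the paper doing this in one line by invoking the finite dimensionality of $\End_{gr^V\EX}(gr^V\CM)$ for holonomic modules and taking the minimal polynomial of $\gTh$. Your d\'evissage on the support re-proves that finiteness in the special case of $\gTh$ (essentially Kashiwara's original argument); the only imprecision is that when the support is a proper subvariety of $\gL$ you should say that $gr^V\CM$ is generically a finite-rank connection \emph{on the stratum} (not a locally free $\CO_\gL$-module), after which the locally constant characteristic polynomial argument goes through unchanged.
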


This theorem has been proved first by Kashiwara for $\CaD$-modules in \cite{INV2}. It has been 
proved for $\EX$-module in \cite{ENS}
as a corollary of theorem \ref{thm:grholo}. In fact, as $gr^V\CM$ is holonomic,  $\End_{gr^V\EX}(gr^V\CM)$
is finite dimensional and the $b$-function is the minimal polynomial of $\gTh$ on $gr^V\CM$.

\subsection{Bifiltration}

From the two filtrations on $\EX$, we get a bifiltration:
\begin{equation}
\forall (k,j)\in\Z\x\Z\qquad W_{k,j}\EX=F_j\EX\cap V_k\EX
\end{equation}

To this bifiltration is associated the bigraded ring:
\begin{equation}\label{def:bif}
gr^W\EX = \bigoplus_{(k,j)} gr^W_{k,j}\EX\quad\textrm{with}\quad gr^W_{k,j}\EX=W_{k,j}\EX/(W_{k-1,j}\EX+W_{k,j-1}\EX)
\end{equation}

This bigraded ring is equal to the graded ring of $gr^V\EX=\CaD_{(\gL)}$ (this latter with the standard filtration).
So it is isomorphic to $\tgp_*\CO_{[\TL]}$, the sheaf of holomorphic functions on $\TL$ 
which are polynomial in the fibers of $\tgp:\TL\to \gL$ and sum of homogeneous functions for the second action
of $\C^*$ on $\TL$. "Second action" means action induced on $\TL$ by the action of $\C^*$ on $\gL$.

Let $\CM$ be a (left) coherent $\EX$-module. A good bifiltration of $\CM$ is a bifiltration compatible 
with the bifiltration of $\EX$ which is locally generated by local sections $(u_1,\dots,u_N)$  of $\CM$ This means
that there are integers $(\gm_1,\dots,\gm_N)$ and $(\gn_1,\dots,\gn_N)$ such that 
$W_{k,j}\CM=\sum_{\gn=1}^NW_{k-k_\gn,j-j_\gn}\EX u_\gn$.

The bigraded module associated to the bifiltration is defined by 
\begin{equation}\label{def:bifm}
gr^W\CM = \bigoplus_{(k,j)} gr^W_{k,j}\CM\quad\textrm{with}\quad gr^W_{k,j}\CM=W_{k,j}\CM/(W_{k-1,j}\CM+W_{k,j-1}\CM)
\end{equation}

\begin{prop}\label{prop:subbigrad}
\ 
\begin{enumerate}
\item The bigraded module associated to a good bifiltration of a coherent $\EX$-module is a coherent
 $gr^W\EX=\tgp_*\CO_{[\TL]}$ module.
\item The bifiltration induced on a coherent submodule by a good bifiltration is still a good bifiltration. 
\end{enumerate}
\end{prop}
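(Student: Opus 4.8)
The plan is to run, for each of the two assertions, the standard ``good filtration'' machinery, but now over the bigraded ring $gr^W\EX$ in place of $gr^F\EX$ or $gr^V\EX$. The engine is the identification recorded above, $gr^W\EX\simeq\tgp_*\CO_{[\TL]}$, the sheaf of functions on $\TL$ polynomial in the fibres of $\tgp$ and homogeneous for the second action. By Oka's theorem this is a coherent sheaf of rings, and its stalks are Noetherian, being polynomial extensions of the Noetherian local rings of germs on $\gL$. Everything below is local, so I may combine coherence with Noetherianity of stalks to propagate finite generation uniformly on small neighbourhoods.

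For part (1) I would start from a local presentation of the good bifiltration, $W_{k,j}\CM=\sum_i W_{k-k_i,j-j_i}\EX u_i$. Passing to the bigraded module, the classes of the $u_i$ in their bidegrees generate $gr^W\CM$ over $gr^W\EX$: each $W_{k-k_i,j-j_i}\EX u_i$ maps onto its image in $gr^W_{k,j}\CM$, and summing over $i$ exhausts $gr^W_{k,j}\CM$ by the defining formula. Hence $gr^W\CM$ is locally finitely generated over the coherent ring $gr^W\EX$, and a finitely generated module over a coherent sheaf of rings is coherent. This settles (1).

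For part (2), let $\CN\subset\CM$ be coherent with the induced bifiltration $W_{k,j}\CN=\CN\cap W_{k,j}\CM$. I would argue through the bi-Rees module. Form the bigraded ring $R=\bigoplus_{(k,j)}W_{k,j}\EX$ and the bigraded $R$-modules $R\CM=\bigoplus W_{k,j}\CM$ and $R\CN=\bigoplus W_{k,j}\CN$, the latter an $R$-submodule of the former. A bifiltration is good exactly when the associated bi-Rees module is locally finitely generated over $R$; so the hypothesis reads $R\CM$ finitely generated and the goal is that $R\CN$ is. Since the associated graded of $R$ is again $gr^W\EX$, which is coherent with Noetherian stalks, the standard Rees-ring transfer shows $R$ itself is coherent with Noetherian stalks; then the submodule $R\CN$ of the finitely generated module $R\CM$ is stalkwise finitely generated, and coherence lets me spread the generators over a neighbourhood. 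Reading off bihomogeneous generators $n_i\in W_{k_i,j_i}\CN$ yields $W_{k,j}\CN=\sum_i W_{k-k_i,j-j_i}\EX n_i$, i.e. the induced bifiltration is good.

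The hard part, and what makes the bifiltered case genuinely different from the filtered one, is precisely the coherence-with-Noetherian-stalks of $R$, i.e. the bifiltered Artin--Rees property. For a single filtration the induced graded $gr^V\CN$ injects into $gr^V\CM$, so goodness of the submodule can be detected inside the ambient graded module; here the denominator $W_{k-1,j}\CN+W_{k,j-1}\CN$ is a sum of two submodules and the natural map $gr^W\CN\to gr^W\CM$ need not be injective, so no direct comparison is available and one must control both filtrations at once, which is exactly what the bi-Rees formulation does. As a consistency check one may instead filter in two steps, first forming $gr^V\CM$ and then $gr^F$, using the identification $gr^W\CM=gr^F(gr^V\CM)$ that follows from the modular law; but this route only relocates the same difficulty to the comparison between the intrinsic and the induced $F$-filtrations on $gr^V\CN$, so I would carry out the bi-Rees argument directly.
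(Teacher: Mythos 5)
Your overall strategy is reasonable, but as written it contains two genuine gaps, and both sit exactly at the point you yourself flag as ``the hard part.'' First, in part (1) the step ``$gr^W\CM$ is locally finitely generated over the coherent ring $gr^W\EX$, and a finitely generated module over a coherent sheaf of rings is coherent'' is false: over a coherent sheaf of rings, locally \emph{finitely generated} does not imply coherent (one needs locally finitely \emph{presented}), and Noetherian stalks do not repair this, because finitely many germs generating the stalk of the kernel of a presentation at a point need not generate the stalks at nearby points unless that kernel is already known to be coherent --- which is the thing to be proved. (A quotient of $\CO_X$ by a non--locally finitely generated ideal subsheaf is finitely generated with Noetherian stalks but not coherent.) The correct route to (1) is to take a bifiltered free presentation of $\CM$, show the kernel inherits a \emph{good} bifiltration --- i.e., the content of (2) --- and then read off $gr^W\CM$ as a cokernel of a map of finite free $gr^W\EX$-modules. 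So (1) cannot be dispatched before (2). Second, in part (2) the assertion that ``the standard Rees-ring transfer shows $R$ itself is coherent with Noetherian stalks'' because its associated bigraded is $gr^W\EX$ is precisely the statement that needs proof. In the analytic category the passage from Noetherianity of the graded ring to coherence of the (bi-)Rees ring is not formal: it requires a zariskian (Artin--Rees plus faithful flatness of completion) property of the filtration, and for the bifiltration $W$ on $\EX$ this does not follow from the single-filtration case by any ``standard transfer.'' You have correctly located the crux but have not supplied it.

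For comparison, the paper avoids constructing a bifiltered Artin--Rees theory altogether. Outside the zero section of $\gL$ one may assume, after a canonical transformation, that $\gL$ is the conormal to a hypersurface, so that $D_t$ is invertible and lies in $W_{1,1}\EX$; multiplying generators by powers of $D_t$, a good bifiltration of $\CM$ becomes equivalent to the datum of a coherent sub-$\EV$-module $\CN=\bigcup_j W_{0,j}\CM$ (where $\EV=gr^V_0\EX$ with its induced $F$-filtration) which generates $\CM$ and carries a good $F$-filtration (Lemma~\ref{lem:equiv}, using Prop.~1.1.10 of \cite{KKREG} for the $\EV$-coherence of $\CN$). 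This collapses the bifiltered problem to a single-filtration problem over $\EV$, for which the zariskian property is already established in \cite{THESE} and \cite{TMF}; the zero section is then handled by adding a dummy variable. If you want to keep your direct bi-Rees approach, you must either prove the bifiltered zariskian property of $W\EX$ from scratch or locate it in the literature; otherwise the reduction to $\EV$ is the efficient path.
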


This result may be found in \cite{TMF} or \cite{ENS}, let us briefly recall its proof.

We denote by $\EV$ the sheaf $gr^V_0\EX$ provided with the filtration induced by that of $\EX$ (cf \cite{KKREG}).

\begin{lem}\label{lem:equiv}
Outside of the zero section of $\gL$, there is an equivalence between the following data:
\begin{itemize}
\item a good bifiltration of $\CM$
\item a coherent sub-$\EV$-module of $\CM$ which generates $\CM$ and is provided with a good filtration.  
\end{itemize}
\end{lem}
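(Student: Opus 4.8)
The plan is to prove the equivalence in Lemma \ref{lem:equiv} by constructing each datum from the other, working microlocally away from the zero section of $\gL$ where the bifiltration has its full two-parameter structure. The key observation is that $\EV=gr^V_0\EX$ carries the filtration induced by $F\EX$, and a good bifiltration of $\CM$ should be recoverable from a single good $\EV$-filtration on a well-chosen $\EV$-submodule together with the $V$-filtration supplied by the $\EX$-structure.

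\emph{From a good bifiltration to an $\EV$-module.} Suppose $\CM$ is given a good bifiltration $W_{k,j}\CM$, locally generated by sections $(u_1,\dots,u_N)$ with shifts $(\gm_\gn)$ and $(\gn_\gn)$. First I would set $\CN = V_0\CM = \bigcup_j W_{0,j}\CM$, the slice of the bifiltration at $V$-level zero. Since $V_0\EX=\EV$ modulo $V_{-1}$ and the $W$ are compatible with $W\EX$, the piece $\CN$ is naturally a $V_0\EX$-module; away from the zero section, where $\gth$ is invertible in the appropriate sense, one checks that $\CN$ generates $\CM$ over $\EX$ by applying the Euler operator to climb up and down the $V$-degrees. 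The induced filtration $F_j\CN = W_{0,j}\CM$ is then a good $\EV$-filtration, with coherence following from part (1) of Proposition \ref{prop:subbigrad} applied to the $W$-bigraded module, whose coherence over $\tgp_*\CO_{[\TL]}$ is exactly the statement that a compatible good filtration on $\CN$ exists.

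\emph{From an $\EV$-module back to a bifiltration.} Conversely, given a coherent sub-$\EV$-module $\CN\subset\CM$ generating $\CM$ over $\EX$ and equipped with a good filtration $F_j\CN$, I would define $W_{k,j}\CM=\sum_{\ell}V_{k-\ell}\EX\cdot F_{j}\CN$ in the manner dictated by pushing the $F$-filtration of $\CN$ out along the $V$-filtration of $\EX$; more precisely $W_{k,j}\CM = V_k\EX\cdot(F_j\CN)$ interpreted so that each $V$-degree shift is matched. One then verifies compatibility $W_{k,j}\EX\cdot W_{k',j'}\CM\subset W_{k+k',j+j'}\CM$ directly from the defining property of $\EV$ and the fact that $F\EX$ and $V\EX$ commute up to the bifiltration, and that the result is locally finitely generated, hence a good bifiltration. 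The two constructions are inverse to each other because recovering $V_0\CM$ from the second construction returns $\CN$ (using that $\CN$ is $V_0$-saturated away from the zero section), and the filtration reconstructed from $V_0\CM$ agrees with the original $F_j\CN$.

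The main obstacle I anticipate is the coherence bookkeeping, namely showing that the filtration one extracts is \emph{good} (locally finitely generated with the correct shift structure) rather than merely compatible, and that this survives the passage between the bigraded picture over $\tgp_*\CO_{[\TL]}$ and the filtered picture over $\EV$. This is precisely where the hypothesis ``outside the zero section'' is used: there the Euler field $\gth$ acts invertibly enough that each $V$-graded piece $gr^V_k\CM$ is controlled by $gr^V_0\CM=gr^V\!\CN$, so coherence over $\CaD_{(\gL)}$ transfers to coherence over $\EV$. I would handle this by identifying $gr^W\CM$ with $gr^F(gr^V\CM)$ and invoking the description of $gr^W\EX$ as $\tgp_*\CO_{[\TL]}$ from \eqref{def:bif}, reducing both coherence claims and part (2) of Proposition \ref{prop:subbigrad} to the standard fact that good filtrations on coherent $\CaD$-modules induce good filtrations on coherent submodules.
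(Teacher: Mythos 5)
Your overall skeleton matches the paper's: take $\CN=\bigcup_j W_{0,j}\CM$ with the filtration $F_j\CN=W_{0,j}\CM$, and conversely let local generators of a good filtration of $\CN$ generate the bifiltration of $\CM$. But two of your key steps do not work as written. The operator you propose for ``climbing up and down the $V$-degrees'' cannot be the Euler operator: $\gvt$ lies in $V_0\EX$ (bi-order $(0,1)$ in the local model), so it does not shift the $V$-degree at all, and its principal symbol in $gr^W\EX$ is exactly the equation of $S_\gL$, so it is in no sense invertible. The paper's mechanism is different and this is where ``outside the zero section'' really enters: one performs a canonical transformation putting $\gL$ in the form $\ensemble{(x,t,\gx,\gt)}{t=0,\ \gx=0,\ \gt\ne0}$, where $D_t$ is an \emph{invertible} element of $W_{1,1}\EX$; multiplying the given generators $u_i$ by suitable powers of $D_t$ moves them into $\CN$ without changing the $\EX$-module they generate. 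This single normalization is what simultaneously shows that $\CN$ generates $\CM$ and exhibits explicit $\EV$-generators of $\CN$. Your argument never produces such generators.

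The second problem is circularity: you invoke part (1) of Proposition \ref{prop:subbigrad} to get coherence of $\CN$ and goodness of its filtration, but in the paper this lemma is precisely the tool used to \emph{prove} that proposition, so it cannot be assumed here. The paper instead obtains $\EV$-coherence of $\CN$ from \cite[Prop.\ 1.1.10]{KKREG} applied to the sub-$\EV$-module generated by the $D_t$-normalized sections, and goodness of $F_j\CN$ is then immediate because those sections are explicit generators. Even setting the circularity aside, coherence of the bigraded module $gr^W\CM$ over $\tgp_*\CO_{[\TL]}$ does not by itself yield a locally finitely generated filtration on $\CN$; you still need generators. The converse direction of your argument is correct and coincides with the paper's.
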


\begin{proof}
Outside of its zero section, we may transform $\gL$ into the conormal
to a hypersurface by a canonical transformation, that is in local coordinates:
$$\gL=\ensemble{(x,t,\gx,\gt)\in T^*X}{t=0, \gx=0, \gt\ne 0}$$
Then the microdifferential operator $D_t$ is invertible. It is an element of $W_{1,1}\EX$.

Consider a good bifiltration $W\CM$ of $\CM$ which is locally generated by sections $u_1,\dots,u_N$  of $\CM$.
Multiplying them by suitable powers of $D_t$ we may assume that $(u_1,\dots,u_N)$ belong to
$$\CN=\bigcup_{j\in\Z}W_{0,j}\CM$$
Then $\CN$ is the sub-$\EV$-module of $\CM$ generated by $(u_1,\dots,u_N)$ and, by \cite[Prop. 1.1.10.]{KKREG},
$\CN$ is an $\EV$-coherent submodule of $\CM$. The sections $(u_1,\dots,u_N)$ define a good filtration of 
$\CN$ and $\CM=\EX\otimes\CN$.

Conversely, let $\CN$ be coherent sub-$\EV$-module of $\CM$ which generates $\CM$ and is provided with a good filtration.  
The good filtration is given locally by sections $(u_1,\dots,u_N)$ which define a good bifiltration of $\CM$. 
\end{proof}

\begin{proof}[Proof of proposition \ref{prop:subbigrad}]
From \cite[\S 2.6.]{THESE} (see also \cite{TMF}), we know that he filtration of $\EV$ is "a good noetherian filtration"
that is a zariskian filtration in the notation of \cite{SCHAPBOOK}. Then the results of \cite{SCHAPBOOK} show that
good $\EV$-filtrations induce good  $\EV$-filtrations on submodules and that the graded ring associated to
a good $\EV$-filtration is a coherent $gr\EV$-module. The result may be then deduce from the previous lemma out
of the zero section of $\gL$.

We get the result on the zero section of $\gL$ by adding a dummy variable as usual (cf \cite[\S A1]{KKREG}).
\end{proof}

Let $\CO_{\TL}$ be the sheaf of holomorphic functions on $\TL$.
As $gr_W\CM$ is a $\tgp_*\CO_{[\TL]}$ coherent module, $\CO_\TL\otimes_{\CO_{[\TL]}}\tgp^{-1}gr_\CM$ is a 
coherent $\CO_\TL$-module. Its support is independent of the good bifiltration.
It is called the microcharacteristic variety of type $(\infty,1)$ and is denoted by $\mCHM\infun$. 
This microcharacteristic variety is an involutive bihomogeneous subvariety of $\TL$ \cite{THESE},\cite{ENS}. 

Let us remark that the same microcharacteristic variety has been defined by Teresa Monteiro Fernandes in \cite{TMF},
where it is denoted by $C¹_\gL(\CM)$, and in \cite{THESE} as the support of the tensor product of the module by
a sheaf of 2-microdifferential operators.

\subsection{Local coordinates}

Assume that the conic lagrangian manifold $\gL$ is the conormal to a submanifold $Y$ of $X$. (By a canonical transformation
we may always transform $\gL$ into a conormal).

Let $(x_1,\dots,x_p,t_1,\dots t_d)$ be local coordinates of $X$ such that $Y=\ensemble{(x,t)\in X}{t=0}$. Then
$\gL=\ensemble{(x,t, \gx,\gt)\in \TX}{t=0,\gx=0}$ and $\TL$ has coordinates $(x,\gt,x^*,\gt^*)$.

The Euler vector field of $\gL$ is $\gth=\sum_{i=1}^N \gt_i D_{\gt_i}$ and its principal symbol is the function
$\gf=\sum_{i=1}^N \gt_i \gt^*_i = <\gt,\gt^*>$. The hypersurface $S_\gL$ of $\TL$ defined by the equation $\gf$
is a canonical hypersurface which will be considered in next section.

For the $W$ bifiltration, the operator $x_i$, $i=1,\dots,p$, is of bi-order $(0,0)$, $t_j$, $j=1,\dots,d$ is of bi-order $(-1,0)$, 
$D_{x_i}$ is of bi-order $(0,1)$, $D_{t_j}$ is of bi-order $(1,1)$.

The operator $\gvt$ may be chosen here as $\sum t_j D_{t_j}$, it is of bi-order $(0,1)$.

\section{Regularity along a lagrangian conic submanifold}\label{sub:defreg}

\subsection{Equivalent definitions of regularity}
The Euler vector field $\gth_\gL$ of $\gL$ is a differential operator on $\gL$, its characteristic
variety is a canonical subvariety of  $\TL$ which will be denoted by $S_\gL$. As in section 
\ref{sec:ext}, $\CM_\gL$ is a simple holonomic  $\EX$-module supported by $\gL$.

\begin{defin}\label{def:defreg}
Let $\gL$ be a lagrangian conic submanifold of $\TX$ and $\CM$ be a holonomic $\EX$-module. The module $\CM$
is {\bf regular along $\gL$} if and only if it satisfies the following equivalent properties:

\begin{enumerate}[i)]\itemsep 2pt
\item The microcharacteristic variety $\mCHM\infun$ is contained in $S_\gL$.
\item The microcharacteristic variety $\mCHM\infun$ is lagrangian.
\item The microcharacteristic variety $\mCHM\infun$ is equal to $\mCHM\infi$ 
\item $\forall j\ge0,\quad  \Ext_\EX^j(\CM,\CM_\gL)= \Ext_\EX^j(\CM,\EIX\ox_\EX\CM_\gL)$
\end{enumerate}
\end{defin}

We recall that $\EIX$ is the sheaf of microdifferential operators of infinite order.

If $\gL$ is the conormal bundle to a submanifold $Y$ of $X$, we may take $\CM_\gL=\CYX$ the
sheaf of holomorphic microfunctions of \cite{SKK}. Then (iv) is reformulated as:
 
(iv)' $\forall j\ge0,\quad  \Ext_\EX^j(\CM,\CYX)= \Ext_\EX^j(\CM,\CIYX)$

\vespa
Let us now briefly remain how the equivalence between the items of definition \ref{def:defreg}
is proved.

By theorem  \ref{thm:grholo}, if $\CM$ is holonomic, $gr^V\CM$ is holonomic hence its 
characteristic variety $\mCHM\infi$ is lagrangian. So if  $\mCHM\infun$ is equal to $\mCHM\infi$
it is lagrangian that is (iii)$\Rightarrow$(ii).

The microcharacteristic variety $\mCHM\infun$ is defined by the coherent $gr_B\CM$-module $\tgp_*\CO_{[\TL]}$
hence is bihomogeneous, that is homogeneous in the fibers of $\TL$ and homogeneous for the action of $\C^*$
induced by the action of $\C^*$ on $\gL$. By section 4.3. of \cite{ENS}, any lagrangian bihomogeneous submanifold
of $\TL$ is contained in $S_\gL$. In fact, if $\gS$ is lagrangian the canonical 2-form $\gO$ of $\TL$ vanishes on $\gS$
and if it is bihomogeneous the vector fields $v_1$ and $v_2$ associated to the two actions of $\C^*$ are tangent to $\gS$.
Hence $\gO(v_1,v_2)$ vanishes on $\gS$ and an easy calculation of \cite{ENS} shows that $\gO(v_1,v_2)$ is an equation of $S_\gL$.
So (ii)$\Rightarrow$ (i). 

Finally, the main parts of the result are (i) $\Rightarrow$(iv) which is given by corollary 4.4.2. of \cite{IRR} and (iv) $\Rightarrow$(iii) given by theorem 2.4.2 of \cite{LME}.

\subsection{Regular $b$-function}\label{sec:main}

Let $\gL$ be a conic lagrangian submanifold of the cotangent bundle $T^*X$. As before, $\gth_\gL$ is the Euler vector field
of $\gL$ and $\gvt_\gL$ is a microdifferential operator in $V_0\EX$ whose image in $gr_0^V\EX$ is $\gth_\gL$.

\begin{defin}\label{def:microregb}
Let $\CM$ be a coherent $\EX$-module with a good bifiltration $W\CM$ relative to $\gL$. A {\sl regular b-function} for
$W\CM$ is a polynomial $b$ such that:
$$\forall (k,j)\in \Z\x\Z, \quad b(\gvt_\gL-k)W_{k,j}\CM \subset W_{k-1,j+n}\CM$$
where $n$ is the degree of $b$.

if $u$ is a section of $\CM$, a  {\sl regular b-function} for $u$ is a regular  $b$-function for the 
canonical bifiltration of the module $\CN = \EX u$ that is $W_{k,j}\CN = (W{k,j}\EX) u$.
\end{defin}

Remark that if $W\CM$ is a good bifiltration, we get a good V-filtration by setting:
$$V_k\CM=\bigcup_{j\in \Z}W_{k,j}\CM$$
So, considering non regular $b$-function, we may define a $b$-function for $W$ to be a $b$-function
for the corresponding $V$-filtration.

\begin{prop}\label{prop:regul}
Let $\CM$ be a holonomic $\EX$-module which a good bifiltration admitting a regular $b$-function,
then $\CM$ is regular along $\gL$.
\end{prop}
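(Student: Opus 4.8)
The plan is to show that the existence of a regular $b$-function forces the microcharacteristic variety $\mCHM\infun$ to coincide with $\mCHM\infi$, and then invoke the equivalence (iii)$\Leftrightarrow$(ii)$\Leftrightarrow$(i)$\Leftrightarrow$(iv) of Definition \ref{def:defreg} to conclude regularity along $\gL$. The key object to control is the bigraded module $gr^W\CM$ and the way the regular $b$-function, through its degree bound, prevents the microcharacteristic variety of type $(\infty,1)$ from being strictly larger than the one of type $\infty$.

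First I would unwind the two filtrations. Recall that the $V$-filtration is recovered from the good bifiltration by $V_k\CM=\bigcup_{j\in\Z}W_{k,j}\CM$, so $gr^V\CM$ is obtained from $gr^W\CM$ by summing over the second index $j$, i.e. $gr^V\CM$ carries the filtration induced by the $F$-order whose associated graded is exactly $gr^W\CM$. The microcharacteristic variety $\mCHM\infi$ is the characteristic variety of $gr^V\CM$ as a $\CaD_{(\gL)}$-module, while $\mCHM\infun$ is the support (after tensoring with $\CO_\TL$) of $gr^W\CM$ as a $\tgp_*\CO_{[\TL]}$-module. These two varieties always satisfy the inclusion coming from the fact that passing from the $(\infty,1)$-graded object to the $\infty$-graded object can only shrink the support; what a regular $b$-function must buy is the reverse inclusion, i.e. that nothing is lost, which by (iii) is precisely regularity.

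The heart of the argument is to translate the defining inequality of Definition \ref{def:microregb},
$$
b(\gvt_\gL-k)\,W_{k,j}\CM\subset W_{k-1,j+n}\CM,\qquad n=\deg b,
$$
into a statement about $gr^W\CM$. On $gr^W_{k,j}\CM$ the operator $\gvt_\gL-k$ induces exactly the endomorphism $\gTh$ of Definition \ref{def:bfonct} (this is the same normalization $\gTh=\gth-k$ on the $k$-th graded piece), so $b(\gTh)$ kills $gr^V\CM$ and $b$ is the $b$-function for the $V$-filtration. The extra content of the \emph{regular} $b$-function is the control on the $F$-order: the relation lands in $W_{k-1,j+n}\CM$ with the second index shifted by exactly $n=\deg b$, no more. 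I would use this to show that the principal symbol $\gf=\langle\gt,\gt^*\rangle$ of $\gth_\gL$, which cuts out $S_\gL$, generates the vanishing of the extra components of $\mCHM\infun$: the degree bound says that the top-$F$-order part of $b(\gvt_\gL-k)$ acting on $gr^W_{k,j}\CM$ coincides with multiplication by $\gf^n$ up to lower order, so $\gf^n$ annihilates any component of $\mCHM\infun$ not already in $\mCHM\infi$. Hence every irreducible component of $\mCHM\infun$ is contained in $S_\gL$, which is item (i).

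The main obstacle I expect is the bookkeeping that identifies the top-order symbol of $b(\gvt_\gL-k)$ with a power of $\gf$ and shows the degree hypothesis is exactly what makes this the \emph{leading} term in the $W$-bigraded sense. One must verify that $\gvt_\gL$ has bi-order $(0,1)$ (as recorded in the local-coordinate discussion, $\gvt=\sum t_jD_{t_j}$ has bi-order $(0,1)$), so that $b(\gvt_\gL-k)$, of $F$-degree $n$, has $W$-symbol equal to $\gf^n$ precisely when $\deg b=n$; a strictly larger order in $Q$ would let lower-degree $b$ leave $\gf^n$ from being the symbol. Once this is clean, one applies \cite[\S 4.3]{ENS} in the direction already used in the excerpt to pass from (i) to regularity via Definition \ref{def:defreg}, and the proof closes. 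I would be careful, as in the proof of Proposition \ref{prop:subbigrad}, to first argue outside the zero section of $\gL$ using Lemma \ref{lem:equiv} (where $D_t$ is invertible and $\gvt_\gL$ is well behaved) and then handle the zero section by the dummy-variable device of \cite[\S A1]{KKREG}.
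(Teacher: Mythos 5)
Your proposal is correct and follows essentially the same route as the paper, whose entire proof is the observation that the regular $b$-function relation forces $\gf^n$ (the class of $b(\gvt_\gL-k)$ in $gr^W_{0,n}\EX$) to annihilate $gr^W\CM$, hence $\mCHM\infun\subset S_\gL$, which is condition (i) of Definition \ref{def:defreg}. The detours through $\mCHM\infi$ and the zero-section/dummy-variable discussion are superfluous but harmless, since (i) alone already gives regularity by definition.
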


Indeed, the existence of a regular $b$-function implies immediately the point (i) of definition \ref{def:defreg}.

By the following theorem and its corollary, the converse is true which gives another condition equivalent to the regularity
along $\gL$.

\begin{thm}\label{thm:mainE}
Let $\CM$ be holonomic $\EX$-module regular along a the conic lagrangian manifold $\gL$.
Any section $u$ of $\CM$ admits locally a {\sl regular b-function}.

If $b(\gth)$ is the $b$-function of $u$, there exits an integer $d$ such that
$\gth^d b(\gth)$ is a {\sl regular b-function} for $u$.
\end{thm}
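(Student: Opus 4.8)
The plan is to reduce to a cyclic module and then to split the statement into a ``$V$-degree'' part, supplied by the ordinary $b$-function, and an ``$F$-degree'' part, supplied by the regularity hypothesis through the vanishing of a symbol.

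First I would replace $\CM$ by the coherent submodule $\CN=\EX u$ equipped with its canonical bifiltration $W_{k,j}\CN=(W_{k,j}\EX)u$. Since the microcharacteristic variety of a submodule is contained in that of the ambient module, $\mCHN\infun\subset\mCHM\infun$, so $\CN$ is again regular along $\gL$ in the sense of definition \ref{def:defreg}. The associated $V$-filtration $V_k\CN=\bigcup_j W_{k,j}\CN=(V_k\EX)u$ is good, hence by theorem \ref{thm:grholo} the module $gr^V\CN$ is holonomic and $\End_{gr^V\EX}(gr^V\CN)$ is finite dimensional; the minimal polynomial of $\gTh$ furnishes the $b$-function $b(\gth)$ of $u$, of degree $m$ say, characterised by $b(\gvt_\gL-k)V_k\CN\subset V_{k-1}\CN$. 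As $\gvt_\gL$ has bi-order $(0,1)$, hence lies in $F_1\EX$, the operator $b(\gvt_\gL-k)$ lies in $F_m\EX$, so that
\[ b(\gvt_\gL-k)\,W_{k,j}\CN\subset V_{k-1}\CN\cap F_{j+m}\CN . \]
This already controls the $V$-degree, but it is \emph{not} a regular $b$-function, because in general the canonical bifiltration satisfies only $W_{k,j}\CN\subsetneq V_k\CN\cap F_j\CN$; the defect between these two subsheaves is precisely what must be absorbed.

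Next I would extract the missing $F$-degree information from regularity. By definition \ref{def:defreg}(i) we have $\mCHN\infun\subset S_\gL=\{\gf=0\}$, where $\gf=\langle\gt,\gt^*\rangle$ is the principal symbol of the Euler field $\gth_\gL$, i.e.\ the class of $\gvt_\gL$ in the bigraded ring $gr^W\EX=\tgp_*\CO_{[\TL]}$ in bidegree $(0,1)$. Since $gr^W\CN$ is a coherent $\tgp_*\CO_{[\TL]}$-module by proposition \ref{prop:subbigrad}, with support $\mCHN\infun$, the analytic Nullstellensatz gives locally an integer $d$ with $\gf^{\,d}\,gr^W\CN=0$. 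Translated back, the vanishing of the symbol $\gf^{\,d}$ of $\gvt_\gL^{\,d}$ means
\[ \gvt_\gL^{\,d}\,W_{k,j}\CN\subset W_{k,\,j+d-1}\CN+W_{k-1,\,j+d}\CN , \]
so that, modulo $V_{k-1}\CN$, the operator $\gth_\gL^{\,d}$ raises the $F$-degree by at most $d-1$ on each $gr^V_k\CN$ instead of by $d$.

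Finally I would combine the two estimates by setting $b'(\gth)=\gth^{\,d}b(\gth)$, of degree $n=m+d$, and prove $b'(\gvt_\gL-k)W_{k,j}\CN\subset W_{k-1,\,j+n}\CN$ by descending induction on the $V$-degree: starting from $y=b(\gvt_\gL-k)w\in V_{k-1}\CN\cap F_{j+m}\CN$, applying $(\gvt_\gL-k)^d$ raises the $F$-degree by only $d-1$ on the top layer $gr^V_{k-1}\CN$ by the symbol estimate, leaving a correction in $V_{k-2}\CN\cap F_{j+n}\CN$ to which the $b$-function and the $\gf^{\,d}$-nilpotence are reapplied, the induction terminating after finitely many $V$-layers. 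The main obstacle is exactly this bookkeeping: one must show that the \emph{single} factor $\gth^{\,d}$ absorbs the defect $V_k\CN\cap F_j\CN/W_{k,j}\CN$ simultaneously at every level. I expect to handle it by working, as in lemma \ref{lem:equiv}, in the chart where $D_t$ is invertible, so that $gr^W\EX$ is the noetherian zariskian ring of proposition \ref{prop:subbigrad}; there the $\gf$-nilpotence of $gr^W\CN$ together with the strictness of good bifiltrations on submodules turns the induction into a finite filtration argument. Reduction to a conormal $\gL$ by a quantized canonical transformation, under which all the data are invariant, then yields the intrinsic statement.
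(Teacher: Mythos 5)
Your setup coincides with the paper's: reduce to $\CN=\EX u$ with its canonical bifiltration, use regularity to get $\gf^{d}\,gr^W\CN=0$, hence $\gvt^{d} W_{k,j}\CN\subset W_{k,j+d-1}\CN+W_{k-1,j+d}\CN$, and bring in the ordinary $b$-function. The gap is in the combination step, and it is a genuine one. You correctly observe that $b(\gvt)u\in V_{-1}\CN$ only places $b(\gvt)u$ in $W_{-1,p+r}\CN$ for some a priori uncontrolled excess $r\ge 0$ (here $p=\deg b$), but your proposed remedy --- apply $\gvt^{d}$ once after $b(\gvt)$ and run a ``descending induction on the $V$-degree'' --- does not absorb this excess. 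The symbol estimate only applies to elements with a known $W$-bidegree, not to elements of $V_{k-1}\CN\cap F_{j+p}\CN$; and once you do place $b(\gvt)u$ in $W_{-1,p+r}\CN$, each application of $\gvt^{d}$ produces two terms, one in $W_{-1,\,\cdot+d-1}\CN$ (the $F$-excess drops by one) and one in $W_{-2,\,\cdot+d}\CN$ (the $V$-degree drops but the $F$-excess is untouched). The branch that always takes the second option keeps the full excess $r$ at every stage while merely sinking in the $V$-filtration, which gains nothing since $W_{-1-a,s}\CN\subset W_{-1,s}\CN$. So the induction neither closes up nor terminates, and the exponent $d$ you extract from the Nullstellensatz is not shown to suffice; appealing to the zariskian property of the filtration of $\EV$ does not by itself produce the required estimate.

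The paper supplies exactly the missing mechanism, and it runs in the opposite order: it first uses the nilpotence to write $\gvt^{q}u=Pu+Qu$ with $P\in W_{-1,q}\EX$ and $Q\in W_{0,q-1}\EX$, then eliminates the $V$-lowering term by the commutator computation $(\gvt^{q}-P)^{r}u=Q_{r}u$ with $Q_{r}\in W_{0,r(q-1)}\EX$, and only then applies $b(\gvt)$. Taking $r$ to be the excess in $b(\gvt)W_{0,j}\CN\subset W_{-1,j+p+r}\CN$ and expanding $(\gvt^{q}-P)^{r}=\gvt^{rq}-A_{r}$ with $A_{r}\in W_{-1,rq}\EX$ yields $b(\gvt)\gvt^{rq}u\in W_{-1,p+rq}\CN$, i.e.\ the regular $b$-function $\gth^{rq}b(\gth)$. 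The essential idea you are missing is this iteration to order $r$ --- one factor $\gvt^{q}$ per unit of $F$-excess, organized through the identity $(\gvt^{q}-P)^{r}u=Q_{r}u$ so that the $V$-lowering part is removed \emph{before} the $b$-function is invoked. Without it, or an equivalent device, the single factor $\gth^{d}$ cannot be shown to control the defect between $V_{k}\CN\cap F_{j}\CN$ and $W_{k,j}\CN$.
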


\begin{proof}
Let $\gth$ be the Euler vector field of $\gL$ and $\gvt$ an operator of $\EX$ whose image in
$gr^V\EX$ is $\gth$. By definition, the principal symbol $\gf$ of $\gth$ is an equation of the canonical
hypersurface $S_\gL$ of $\TL$.

If $\CM$ is regular along $\gL$, the submodule $\CN=\EX u$ is regular along $\gL$. We consider on $\CN$ the canonical bifiltration,
that is $W_{kj}\CN=(W_{kj}\EX) u$.
By definition \ref{def:defreg} of regularity,  $\mCHN\infun$ is contained in $S_\gL$
hence $gr^W\CM$ is annihilated by a power of $\gf$. As the operator $\gvt$ is of bi-order $(0,1)$, there is an integer $q$ such that
$$\gvt^q W_{k,j}\CN\subset W_{k-1,j+q}\CN + W_{k,j+q-1}\CN$$
Hence there is an operator $P\in W_{-1,q}\EX$ and  an operator $Q\in W_{0,q-1}\EX$ such that  
$$\gvt^q u = Pu+Qu$$

We have $(\gvt^q - P)^2 u = Q(\gvt^q - P)u + [\gvt^q - P,Q]u = Q^2 u + [\gvt^q - P,Q]u$. The commutator 
 $[\gvt^q - P,Q]$ belongs to $W_{0,2(q-1)}\EX$ as well as $Q^2$ so $(\gvt^q - P)^2 u = Q_2 u$ with $Q_2\in W_{0,2(q-1)}\EX$.
 By induction we find that $(\gvt^q - P)^r u = Q_r u$ with $Q_r\in W_{0,r(q-1)}\EX$.

\vespa
Let $b$ be the b-function of $u$. If $p$ is the degree of $b$, there is some integer $r$
such that $b(\gvt) W_{0,j}\CN \subset W_{-1,j+p+r}\CN$.

We get 
$$b(\gvt)(\gvt^q - A)^r u \subset b(\gvt)W_{0,rq-r}\CN \subset W_{-1,p +rq}\CN$$

The operator $(\gvt^q - A)^r$ is equal to $\gvt^{rq} - A_r$ where $A_r$ belongs to $W_{-1,rq}\EX$.
hence 
$b(\gvt)\gvt^{rq} u \subset W_{-1,p +rq}\CN$ and 
as $p+rq$ is the degree of $b(\gvt)\gvt^{rq}$, this is a regular $b$-function for $u$.
\end{proof}

\begin{rem}\label{rem:mainE}
It is clear in the proof that $\gth^d b(\gth)$ may be replaced by $(\gth-\ga)^d b(\gth)$
for any complex number $\ga$.
\end{rem}

\begin{cor}\label{cor:mainE}
Let $\CM$ be holonomic $\EX$-module regular along the conic lagrangian manifold $\gL$.
Any  good bifiltration $W\CM$ relative to $\gL$ admits locally a {\sl regular b-function}.

If $b(\gth)$ is the $b$-function associated to this bifiltration, there exits an integer $d$ such that
$\gth^d b(\gth)$ is a {\sl regular b-function}.
\end{cor}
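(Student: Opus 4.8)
The plan is to run the argument of Theorem \ref{thm:mainE} at the level of the bifiltration $W\CM$ rather than for a single cyclic generator, so that the regular $b$-function comes out directly in the stated form $\gth^d b$. The two inputs of that proof---the inclusion produced by regularity and the order bound produced by the $b$-function---both make sense for an arbitrary good bifiltration, and the combination is formally identical; the only point requiring care is the uniformity of the order estimate attached to the $b$-function. Reducing instead to Theorem \ref{thm:mainE} via a generating set $u_1,\dots,u_N$ is possible but would only produce a product of the generators' $b$-functions, not the $b$-function $b$ of the bifiltration itself, so I prefer the direct route.

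First I would use regularity. Since $\CM$ is regular along $\gL$, point (i) of Definition \ref{def:defreg} gives $\mCHM\infun\subset S_\gL$, so the coherent $gr^W\EX$-module $gr^W\CM$ is supported on $S_\gL=\{\gf=0\}$ and is therefore annihilated by some power $\gf^q$ of the symbol $\gf$ of $\gth$. As $\gvt$, and hence $\gvt-k$ for any $k$, has bi-order $(0,1)$ and bi-symbol $\gf$, this translates into
$$(\gvt-k)^q W_{k',j}\CM\subset W_{k'-1,j+q}\CM+W_{k',j+q-1}\CM$$
for every $(k',j)$. A straightforward induction on $r$ (applying $(\gvt-k)^q$ repeatedly and absorbing the level-$(k'-1)$ contributions) then yields, for all $r\ge 1$,
$$(\gvt-k)^{rq}W_{k,j}\CM\subset W_{k-1,j+rq}\CM+W_{k,j+rq-r}\CM,$$
which is the bifiltration analogue of the identity $(\gvt^q-P)^r u=Q_r u$ in the proof of Theorem \ref{thm:mainE}.

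Next I would bring in the $b$-function. The $b$-function $b$ of the bifiltration is by definition the $b$-function of the associated $V$-filtration $V_k\CM=\bigcup_j W_{k,j}\CM$, so $b(\gvt-k)V_k\CM\subset V_{k-1}\CM$; since $b(\gvt-k)$ lies in $W_{0,p}\EX$ with $p=\deg b$, goodness of the bifiltration provides an integer $s$, independent of $(k,j)$, with $b(\gvt-k)W_{k,j}\CM\subset W_{k-1,j+p+s}\CM$. Choosing $r=s$ and $d=rq$, I would apply $b(\gvt-k)$ to the iterated inclusion. The contribution from $W_{k-1,j+rq}\CM$ is already at level $k-1$, and $b(\gvt-k)\in W_{0,p}\EX$ keeps it inside $W_{k-1,j+rq+p}\CM$; the contribution from $W_{k,j+rq-r}\CM$ is at level $k$ with matching shift, so the $b$-function bound drops it to $W_{k-1,(j+rq-r)+p+s}\CM=W_{k-1,j+rq+p}\CM$ because $r=s$. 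Both sit in $W_{k-1,j+n}\CM$ with $n=rq+p=\deg(\gth^d b)$, which is exactly the condition of Definition \ref{def:microregb}, so $\gth^d b$ is a regular $b$-function for $W\CM$.

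The main obstacle is the uniform order bound $b(\gvt-k)W_{k,j}\CM\subset W_{k-1,j+p+s}\CM$, i.e.\ that the excess $F$-order created when $b(\gvt-k)$ lowers the $V$-level is bounded independently of $(k,j)$. This is the precise analogue of the ``there is some integer $r$'' step used for the cyclic module in Theorem \ref{thm:mainE}, and it is an Artin--Rees/coherence property of good bifiltrations: it follows from the coherence of $gr^W\CM$ over the noetherian ring $gr^W\EX$, obtained through the zariskian filtration of $\EV$ in the proof of Proposition \ref{prop:subbigrad}. Once this uniformity is secured, the remaining work is the bookkeeping above, and the choice $r=s$, $d=rq$ makes the two surviving terms collapse into the single target $W_{k-1,j+n}\CM$.
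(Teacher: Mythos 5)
Your proof is correct, but it follows a genuinely different route from the paper's. The paper reduces the corollary to Theorem \ref{thm:mainE}: it takes local generators $u_1,\dots,u_N$ of the bifiltration with shifts $(\gl_i,\gn_i)$, observes that $b(\gth-\gl_i)$ is a $b$-function for each $u_i$ (so one does \emph{not} end up with a product of the generators' $b$-functions --- this is the point your first paragraph worries about, and the paper resolves it by noting that the theorem only needs \emph{a} $b$-function for $u_i$, not the minimal one), applies the theorem and Remark \ref{rem:mainE} to get exponents $d_i$, sets $d=\max d_i$, and transfers the conclusion to a general section of $W_{k,j}\CM$ via the commutation $\gvt P=P(\gvt+k-\gl_i)+R$. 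You instead rerun the proof of Theorem \ref{thm:mainE} directly on $W\CM$: the inclusion $(\gvt-k)^qW_{k,j}\CM\subset W_{k-1,j+q}\CM+W_{k,j+q-1}\CM$ from $\mCHM\infun\subset S_\gL$, its iteration, and the collapse after applying $b(\gvt-k)$ with $r=s$ are all verified correctly, and working with module inclusions rather than operator identities $(\gvt^q-P)^ru=Q_ru$ is if anything cleaner. The one input your route needs beyond what the theorem's proof literally states is the comparison $W_{k,j}\CM\cap V_{k-1}\CM\subset W_{k-1,j+s}\CM$ \emph{uniformly in $k$}, whereas the paper only invokes the analogous estimate for the cyclic module at level $k=0$ and handles the $k$-dependence through the generators. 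You correctly flag this as the crux; it does hold locally, since $\bigoplus_{k,j}(W_{k,j}\CM\cap V_{k-1}\CM)$ is a bigraded submodule of the Rees module of the good bifiltration, hence finitely generated by the noetherianity underlying Proposition \ref{prop:subbigrad}, and finitely many generators yield a uniform $s$. With that lemma made explicit, your argument is complete; its advantage is that it produces the regular $b$-function for the bifiltration in one stroke, while the paper's advantage is that it reuses Theorem \ref{thm:mainE} as a black box and only needs the Artin--Rees estimate in the cyclic case.
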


\begin{proof}
Let $(u_1,\dots,u_N)$ be local generators of the bifiltration. By definition, there are integers $(\gl_1,\dots,\gl_N)$
and $(\gn_1,\dots,\gn_N)$ such that 
$$W_{kj}\CM = \sum_{i=1}^NW_{k-\gl_i,j-\gn_i}\EX u_i$$
If $b(\gth)$ is the $b$-function of the bifiltration, $b(\gth-\gl_i)$ is a $b$-function for the section $u_i$.
By theorem \ref{thm:mainE} and remark  \ref{rem:mainE}, there exists an integer $d_i$ such that $(\gth-\gl_i)^{d_i} b(\gth-\gl_i)$
is a regular $b$-function for $d_i$. Let $d$ be the maximum of $d_i$ for $i=1,\dots,N$.

As remarked in section \ref{sec:vfil2},  $[\gth,P]=+kP$ on $gr^V_k\EX$ hence if $P$ is an operator of 
$W_{k-\gl_i,j-\gn_i}\EX$ we have $\gvt P = P (\gvt+k-\gl_i) +R$ with $R$ in $W_{k-\gl_i-1,j-\gn_i}\EX$.

If $u$ is a section of $W_{kj}\CM$ we have $u=\sum P_i u_i$ with $P_i$ in $W_{k-\gl_i,j-\gn_i}\EX$ and 
$(\gth-k)^d b(\gth-k)$ is a regular $b$-function for $u$.
\end{proof}

\subsection{Application to $\DX$-modules}

Previous results have been established for $\EX$-modules on the whole of $\TX$ including the zero section
that is also for $\DX$-modules.

If $x$ is a point of the zero section $X$ of $\TX$, a conic lagrangian submanifold of $\TX$
defined in a neighborhood of $x$ is the conormal $\TYX$ to a submanifold $Y$ of $X$.
Let $\BIYX=\CH^1_Y(\OX)$ be the cohomology of $\OX$ and 
$\BYX=\CH^1_{[Y]}(\OX)$ the corresponding algebraic cohomology.

If $Y$ has codimension $1$ we have:
$$\CIYX/\CYX=\gp^{-1}(\BIYX/\BYX)\qquad \textrm{with}\quad \gp:\TYX\to Y$$
So if $\CM$ is a holonomic $\DX$-module condition (iv) of definition \ref{def:defreg} is equivalent to

(iv)'' $\forall j\ge0,\quad  \Ext_\DX^j(\CM,\BYX)= \Ext_\DX^j(\CM,\BIYX)$

and by \cite{LME} this is equivalent to formula \ref{equ:byx}.

If $Y$ has codimension greater than $1$, the situation is different. By a canonical transformation,
the situation is microlocally equivalent to the situation of codimension $1$. But in the definition
\ref{def:defreg} we have to consider points of $\dTYX=\TYX-Y$ the complementary of the zero section in the conormal bundle. 
Regularity is then local on $\dTYX$. The equivalence between algebraic and analytic definitions that is between conditions
(i) to (iii) in definition \ref{def:defreg} and condition (iv) have to be stated locally on $\dTYX$

Let $\gp:\dTYX\to Y$ and $U$ an open set of $Y$. If a $\DX$-module is regular along $\dTYX$ globally on  $\gp^{-1}U$
 we have 
$$\forall j\ge0,\quad  \Ext_\DX^j(\CM,\BYX)|_U= \Ext_\DX^j(\CM,\BIYX)|_U$$
but the converse is not true.

\subsection{ A counterexample}

We give here a very simple example due to T. Oaku which shows that the regular $b$-function may be
different from the $b$-function.

Let $X= \C^2$ and $Y$ the hypersurface $Y=\ensemble{(x,t)\in X}{t=0}$. Consider the $\DX$-module
$\CM=\DX/\CI$ where $\CI$ is the ideal of $\DX$ generated by the two operators $P=D_t^2$ and
$Q=D_t+D_x^2$.

This module is nonzero as the function $1$ is a solution. Its characteristic variety is the zero section
of the cotangent bundle $T^*X$ hence it is isomorphic as a $\DX$-module to a power of the sheaf $\OX$ of
holomorphic functions on $X$. So, this module is regular holonomic.

Let $u$ be the class of the operator $1$ in $\CM$. As $u$ is annihilated by $tQ=tD_t+tD_x^2$, the $b$-function
of $u$ along $Y$ is $\gvt=tD_t$. But this not a regular $b$-function and a regular $b$-function for $u$ along
$Y$ is given by $t^2P=t^2D_t^2=tD_t(tD_t-1)=\gvt(\gvt-1)$.

Remark however that $\CM$ is isomorphic to $(\OX)^4$. The function $1$ in $\OX$ is annihilated by $D_t$ hence its
$b$-function is $\gvt=tD_t$ and is a regular $b$-function. So $\CM$ is generated by four sections for which
the regular $b$-function is equal to the $b$-function $\gvt$, that is there is on $\CM$ a bifiltration for which
the $b$-function is a regular $b$-function.

We can see also on this example that the analytic cycle defined by the graded ring of a good bifiltration depend
on the good bifiltration. This situation is very different from the case of filtrations where good filtrations
define additive objects as analytic cycles which are independent of the good filtration \cite[Prop. 1.3.1]{SCHAPBOOK}.

\providecommand{\bysame}{\leavevmode\hbox to3em{\hrulefill}\thinspace}
\providecommand{\MR}{\relax\ifhmode\unskip\space\fi MR }
% \MRhref is called by the amsart/book/proc definition of \MR.
\providecommand{\MRhref}[2]{%
  \href{http://www.ams.org/mathscinet-getitem?mr=#1}{#2}
}
\providecommand{\href}[2]{#2}

\vespa\vespa

\enddocument

\end